\documentclass{amsart}



\usepackage{amssymb,amsmath}
\usepackage[colorlinks,plainpages,backref]{hyperref}
\usepackage{mathrsfs}
\usepackage[neveradjust]{paralist}
\usepackage[all]{xypic}
\xyoption{dvips}
\usepackage{epic,eepic}
\usepackage{url}


\theoremstyle{definition}
\newtheorem{ntn}{Notation}[section]
\newtheorem{dfn}[ntn]{Definition}
\theoremstyle{plain}
\newtheorem{lem}[ntn]{Lemma}
\newtheorem{prp}[ntn]{Proposition}
\newtheorem{thm}[ntn]{Theorem}

\theoremstyle{remark}

\newtheorem{rmk}[ntn]{Remark}
\newtheorem{exa}[ntn]{Example}


\newcommand{\boldone}{{\mathbf{1}}}
\newcommand{\bolda}{{\mathbf{a}}}

\newcommand{\boldu}{{\mathbf{u}}}

\newcommand{\del}{\partial}

\newcommand{\ideal}[1]{{\langle#1\rangle}}
\newcommand{\into}{\hookrightarrow}
\newcommand{\onto}{\twoheadrightarrow}

\newcommand{\calD}{\mathscr{D}}
\newcommand{\calE}{\mathscr{E}}
\newcommand{\calF}{\mathscr{F}}

\newcommand{\calM}{\mathscr{M}}
\newcommand{\calO}{\mathscr{O}}

\newcommand{\CC}{\mathbb{C}}
\newcommand{\NN}{\mathbb{N}}
\newcommand{\QQ}{\mathbb{Q}}

\newcommand{\ZZ}{\mathbb{Z}}

\renewcommand{\bar}{\overline}


\DeclareMathOperator{\Ext}{Ext}

\DeclareMathOperator{\Hom}{Hom}

\DeclareMathOperator{\mods}{-mods}

\DeclareMathOperator{\qdeg}{qdeg}
\DeclareMathOperator{\rk}{rk}
\DeclareMathOperator{\Res}{Res}
\DeclareMathOperator{\Sol}{Sol}

\DeclareMathOperator{\Spec}{Spec}
\DeclareMathOperator{\tdeg}{tdeg}
\DeclareMathOperator{\Var}{Var}
\DeclareMathOperator{\vol}{vol}

\begin{document}

\title[Resonance = reducibility]
{Resonance equals reducibility for $A$-hypergeometric systems}



\author{Mathias Schulze}
\address{
M. Schulze\\
Oklahoma State University\\
Department of Mathematics\\
Stillwater, OK 74078\\
USA}
\email{mschulze@math.okstate.edu}
\thanks{}

\author{Uli Walther}
\address{
U. Walther\\
Purdue University\\
Department of Mathematics\\
150 N.\ University St.\\
West Lafayette, IN 47907\\
USA}
\email{walther@math.purdue.edu}
\thanks{UW was supported by the NSF under grant DMS~0901123.}
\dedicatory{}


\begin{abstract}
Classical theorems of Gel'fand et al. and recent results of Beukers show that nonconfluent Cohen--Macaulay $A$-hypergeometric systems have reducible monodromy representation if and only if the continuous parameter is $A$-resonant.

We remove both the confluence and Cohen--Macaulayness conditions while simplifying the proof.
\end{abstract}

\subjclass{13N10,14M25,32S40}

\keywords{toric, hypergeometric, Euler--Koszul, D-module, resonance, monodromy}

\maketitle
\setcounter{tocdepth}{1}
\tableofcontents
\numberwithin{equation}{section}

\section{Introduction}\label{18}

In a series of seminal papers of the 1980's, Gel'fand, Graev, Kapranov and Zelevinski{\u\i} introduced {\em $A$-hypergeometric systems} $H_A(\beta)$, a class of maximally over\-determined systems of linear PDEs. 
These systems, today also known as \emph{GKZ-systems}, are induced by an integer $d\times n$-matrix $A$ and a parameter vector $\beta\in\CC^d$.

$A$-hypergeometric structures are nearly ubiquitous, generalizing most classical differential equations. 
Indeed, toric residues, generating functions for intersection numbers
on moduli spaces, and special functions (Gau\ss, Bessel, Airy, etc.)
may all be viewed as solutions to GKZ-systems, and the same is true for  varying Hodge structures on families of Calabi--Yau toric hypersurfaces as well as the space of roots of univariate polynomials with undetermined coefficients.

We shall identify $A$ with its set of columns $\bolda_1,\dots,\bolda_n$.
A parameter $\beta$ is \emph{nonresonant} if it is not contained in the locally finite subspace arrangement of \emph{resonant} parameters
\begin{equation}\label{17}
\Res(A):=\bigcup_{\tau}\left(\ZZ A+\CC\tau\right),
\end{equation}
the union being taken over all linear subspaces $\tau\subseteq \QQ^n$ that form a boundary component of the rational polyhedral cone $\QQ_+ A$.

Assuming that the toric ring $\CC[\NN A]=\CC[\bolda_1,\dots,\bolda_n]$ is Cohen--Macaulay and standard graded (the latter is equivalent to the classical notion of nonconfluence, see \cite{SW08}),
Gel'fand et al.~\cite{GKZ89,GKZ90} proved the following fundamental theorems:

\begin{enumerate}[(I)]
\item\label{1a} $H_A(\beta)$ is holonomic; 
\item\label{1b} the rank (dimension of the solution space) of $H_A(\beta)$ equals the degree of $\CC[\NN A]$ for generic $\beta$; 
\item\label{1c} if $\beta$ is nonresonant, the monodromy representation of the solutions of $H_A(\beta)$ in a generic point is irreducible.
\end{enumerate}

More recent research has shown that statements \eqref{1a} and \eqref{1b} hold true  irrespective of whether $\CC[\NN A]$ is Cohen--Macaulay or standard graded, \cite{Ado94,SST00,MMW05}. 
In Theorems~\ref{8} and \ref{9}, we prove the same of statement \eqref{1c} while providing a converse inspired by \cite{Beu10}.

The crucial tool for the proof of \eqref{1c} in \cite[Thm.~2.11]{GKZ90} is the Riemann--Hilbert correspondence of Kashiwara and Mebkhout, relating regular holonomic $D$-modules to perverse sheaves.
Confluence (i.e., irregularity) of $M_A(\beta)$
rules out the use of the Riemann--Hilbert correspondence
in the general case.

A powerful way of studying $H_A(\beta)$ is to consider the corresponding $D$-module $M_A(\beta)$ on $\CC^n$ as a $0$-th homology of the \emph{Euler--Koszul complex} $K_\bullet(\CC[\NN A],\beta)$.
This idea can be traced back to \cite{GKZ89} and was developed into a functor in \cite{MMW05}. 
Results from \cite{MMW05} show that $K_\bullet(\CC[\NN A],\beta)$ is a
resolution of $M_A(\beta)$ if and only if $\beta$ is not in the \emph{$A$-exceptional arrangement} $\calE_A$ (see Remark~\ref{30}), a well-understood (finite) subspace arrangement of $\CC^n$ comprised of the parameters $\beta$ for which the solution space of $H_A(\beta)$ is unusually large.

Surprisingly, the Euler--Koszul technique combined with the $D$-module/represen\-tation-theoretic description of GKZ-systems from \cite{SW09} serves as a replacement for the Riemann--Hilbert correspondence in the proof of \eqref{1c}.
This provides an approach that is simultaneously conceptually simpler
and more widely applicable. 

\subsubsection*{Acknowledgments}
We are grateful to the referees for their comments, and for informing us that Mutsumi Saito has an article in press with \emph{Compositio~Math.}\ that also discusses reducibility of GKZ-systems (in much greater detail). 
We would also like to thank Alan Adolphson for raising a relevant question.

\section{Hypergeometric system and Euler--Koszul homology}\label{15}

\subsection{Hypergeometric D-module}

Let $A=(a_{i,j})\colon\ZZ^n\to\ZZ^d$ be an integer $d\times n$-matrix, which we view both as a map, and as the finite subset $\{\bolda_1,\dots,\bolda_n\}$ of columns. 
We assume that the additive group $\ZZ A$ generated by the columns of $A$ is the free Abelian group $\ZZ^d$, but we do not assume that $A$ is positive, i.e., we do allow nontrivial units in the semigroup $\NN A$ (see Remarks~\ref{41} and \ref{42}).

Let $x_A=x_1,\dots,x_n$ be coordinates on $X:=\CC^n$, and let $\del_A=\del_1,\dots,\del_n$ be the corresponding partial derivative operators on $\CC[x_A]$. 
Then the \emph{Weyl algebra}
\[
D_A=\CC\ideal{x_A,\del_A\mid[x_i,\del_j]=\delta_{i,j},\,[x_i,x_j]=0=[\del_i,\del_j]}
\]
is the ring of algebraic differential operators on $\CC^n$.
With $\boldu_+=(\max(0,u_j))_j$ and $\boldu_-=\boldu_+-\boldu$, write $\square_\boldu$ for $\del^{\boldu_+}-\del^{\boldu_-}$, where here and elsewhere we freely use multi-index notation. 
The \emph{toric relations of $A$} are then
\[
\square_A:=\{\square_\boldu \,\mid\, A\boldu=0\}\subseteq R_A:=\CC[\del_A],
\]
and generate the \emph{toric ideal} $I_A=R_A\cdot\square_A$, whose residue ring is the \emph{toric ring} 
\[
S_A:=R_A/I_A\cong\CC[\NN A]=\CC[\bolda_1,\dots,\bolda_n].
\]
The \emph{Euler vector fields} $E=E_1,\dots,E_d$ induced by $A$ are defined as
\[
E_i:=\sum_{j=1}^na_{i,j}x_i\del_j.
\]
Then, for $\beta\in\CC^d$, the \emph{$A$-hypergeometric ideal} and \emph{$D$-module} are by \cite{GGZ87,GKZ89} the left $D_A$-ideal and -module
\[
H_A(\beta)=D_A\cdot\{E-\beta\}+D_A\cdot\square_A\quad\text{and}\quad
M_A(\beta)=D_A/H_A(\beta).
\]
The structure of the solutions to $H_A(\beta)$ is tightly interwoven with the combinatorics of the pair $(A,\beta)\in(\ZZ A)^n\times\CC A$ (see, for instance, \cite{ST98,CAD99,MM06,Oku06,Ber08}).

\begin{rmk}\label{41}
Suppose we were to weaken the condition $\ZZ A=\ZZ^d$ to ``the rank of $\ZZ A$ is $d$\,''. 
Pick a basis $B$ for $\ZZ A$, interpreted as elements of $\ZZ^d$. 
In terms of $B$, $A$ takes the form of the $d\times n$ matrix $A'$ (say) which satisfies $A=BA'$ and $\ZZ A'=\ZZ^d$. 
Choose $\beta\in\CC A=\CC A'$. 
The hypergeometric systems $H_A(\beta)$ and $H_{A'}(B^{-1}\beta)$ are equivalent since $\ker_{\ZZ^n}(A)=\ker_{\ZZ^n}(A')$.
\end{rmk}

\subsection{Torus action}\label{33}

Consider the algebraic $d$-torus $T:=\Spec(\CC[\ZZ A])\cong(\CC^*)^d$ with coordinate functions $t=t_1,\dots,t_d$. 
The columns $\bolda_1,\dots,\bolda_n$ of $A$ can be viewed as characters $\bolda_i(t)=t^{\bolda_i}$ on $T$, and the parameter vector $\beta\in\CC^d$ as a character on its Lie algebra via $\beta(t_i\del_{t_i})=-\beta_i+1$. 
These characters define an action of $T$ on $X^*:=\Spec(\CC[\NN^n])$, interpreted as the cotangent space $T^*_0X$ of $X$ at $0$, by
\[
t\cdot\del_A=(t^{\bolda_1}\del_1,\dots,t^{\bolda_n}\del_n).
\]
The toric ideal $I_A$ is the ideal of the closure of the orbit $T\cdot\boldone_A$ of $\boldone_A=(1,\ldots,1)$ in $X^*$, whose coordinate ring is $S_A$.

The contragredient action of $T$ on the coordinate ring $R_A$ of
$X^*$ is  given by
\[
(t\cdot P)(\del_A)=P(t^{-\bolda_1}\del_1,\dots,t^{-\bolda_n}\del_n)
\]
for $P\in R_A$. It yields a \emph{$\ZZ A$-grading} on $R_A$ on the
coordinate ring $\CC[x_A,\del_A]$ of $T^*X$:
\begin{equation}\label{10}
-\deg(\del_j)=\bolda_j=\deg(x_j).
\end{equation}
In particular, $\deg(\del^\boldu)=A\boldu$, and $E-\beta$ and $\Box_A$
are homogeneous.

The following description of $M_A(\beta)$ was given in \cite{SW09}.
Consider the algebraic $\calD_T$-module
\[
\calM(\beta):=\calD_T/\calD_T\cdot\ideal{\del_tt+\beta},
\]
where $\del_tt:=\del_1t_1,\dots,\del_dt_d$.
It is $\calO_T$-isomorphic to $\calO_T$ but equipped with a twisted $\calD_T$-module structure expressed symbolically as
\[
\calM(\beta)=\calO_T\cdot t^{-\beta-1}
\]
on which $\calD_T$ acts via the product rule.
The orbit inclusion
\[
\phi\colon T\to T\cdot \boldone\into X
\]
gives rise to a (derived) direct image functor $\phi_+\colon\calD_T\mods\to \calD_X\mods$. 
On $X$ one has access to the \emph{Fourier transform}: $\calF(x_i)=\del_i$, $\calF(\del_i)=-x_i$.
By \cite[Prop.~2.1]{SW09}, $\calF\circ \phi_+\calM(\beta)$ is represented by the Euler--Koszul complex $K_\bullet(S_A[\del_A^{-1}],\beta)$.
Thus, the latter is quasiisomorphic to $K_\bullet(S_A,\beta)$ if $\beta\not\in\Res(A)$ by \cite[Thm.~3.6]{SW09} and hence \cite[Cor.~3.8]{SW09} yields
\begin{equation}\label{43}
M_A(\beta)=\calF\circ\phi_+\calM(\beta)\text{ if }\beta\not\in\Res(A).
\end{equation}

\subsection{Euler--Koszul functor}\label{40}

We say that $\beta\in\ZZ A$ is a \emph{true degree} of the graded $R_A$-module $M$ if $\beta$ is the degree of a nonzero homogeneous element of $M$. 
The \emph{quasidegrees} of $M$ are the points $\qdeg(M)$ in the Zariski closure of $\tdeg(M)\subseteq\ZZ A\subseteq\CC A$.  

A graded $R_A$-module $M$ is called a \emph{toric module} if it has a finite filtration by graded $R_A$-modules such that each filtration quotient is a finitely generated $S_A$-module.  
The toric modules with $\ZZ A$-homogeneous maps of degree zero form a category that is closed under subquotients and extensions.
For every toric module the quasidegrees form a finite subspace arrangement where each participating subspace is a shift of a complexified face of $\QQ_{\ge 0}A$ by a lattice element.

For all $\beta\in\CC^d$ and for any toric $R_A$-module $M$ one can define a collection of $d$ commuting $D_A$-linear endomorphisms denoted $E_i-\beta_i$, $1\le i\le d$, on the $D_A$-module $D_A\otimes_{R_A}M$ which operate on a homogeneous element $m\in D_A\otimes_{R_A} M$ by $m\mapsto
(E_i-\beta_i)\circ m$, where
\[
(E_i-\beta_i)\circ m=
(E_i-\beta_i-\deg_i(m))\cdot m.
\]
There is an exact functor $K_\bullet(-,\beta)=K_\bullet(-,E-\beta)$ from the category of graded $R_A$-modules to the category of complexes of graded $D_A$-modules; it sends $M$ to the Koszul complex defined by all morphisms $E_i-\beta_i$. 
On toric modules, the functor returns complexes with holonomic homology.  
A short exact sequence
\[
0\to M'\to M\to M''\to 0
\]
of graded $R_A$-modules with homogeneous maps of degree zero induces a long exact sequence of \emph{Euler--Koszul homology}
\[
\cdots\to H_i(M'',\beta)\to H_{i-1}(M',\beta)\to
H_{i-1}(M,\beta)\to H_{i-1}(M'',\beta)\to\cdots
\]
where $H_i(-,\beta)=H_i(K_\bullet(-,\beta))$. 
If $M=S_A$ then $H_0(M,\beta)=M_A(\beta)$.

We refer to \cite{MMW05,SW09} for more details.

\subsection{Rank (jumps) and monodromy reducibility}

We shall write $D_A(x_A)$ for the ring of $\CC$-linear differential
operators on $\CC(x_A)$; note that $D_A(x_A)
=\CC(x_A)\otimes_{\CC[x_A]}D_A$ as left $D_A$-module. We further set
$M(x_A):=\CC(x_A)\otimes_{\CC[x_A]}M$ for any $D_A$-module $M$.  

The \emph{rank} $\rk(M)$ of a $D_A$-module $M$ is the
$\CC(x_A)$-dimension of $M(x_A)$. 
By Kashiwara's Cauchy--Kovalevskaya Theorem (see \cite[Thm.~1.4.19]{SST00}), it equals the $\CC$-dimension of the \emph{solution space} $\Sol(M)=\Hom_{D_A}(M,\CC\{x_A-\varepsilon\})$ of $M$ with coefficients in the convergent power series near the generic point $x_A=\varepsilon$ in (the analytic space associated to) $X$.

\begin{rmk}\label{30}
By \cite[Thm.~5.15]{Ado94} and \cite[Thms.~2.9, 7.5]{MMW05},
\[
\rk M_A(\beta)\ge\vol_A(A)
\]
with equality for generic $\beta\in\CC^n$.
Here $\vol_A(G)$ denotes, for any $G\subseteq\ZZ A$, the simplicial
volume of the convex hull of $G$ taken in the lattice $\ZZ A$.
More precisely, equality is equivalent to $\beta\not\in\calE_A$ where
\[
\calE_A:=\sum_{j=1}^n\bolda_j -
\bigcup_{i=0}^{d-1}\qdeg\left(\Ext^{n-i}_{R_A}(S_A,R_A)\right)
\]
is the \emph{exceptional arrangement}.  
\end{rmk}

\begin{dfn}
We say that a $D_A$-module $M$ has \emph{irreducible monodromy} if $M(x_A)$ is an irreducible $D_A(x_A)$-module (i.e.\ it has no nontrivial $D_A(x_A)$-quotients).
\end{dfn}

By \cite[Thm.~3.15]{Wal07}, monodromy irreducibility of $M(\beta)$ is a property of the equivalence class $\beta\in\CC A/\ZZ A$.

The nomenclature is based on the Riemann--Hilbert correspondence:
$D_A(x_A)$-quotients of $M(x_A)$ correspond to monodromy-invariant subspaces of $\Sol(M)$ in nonsingular points of $M$. 
(Analytic continuations of an analytic germ satisfy the same differential equations as the germ itself).

\begin{rmk}\label{42}
Careful reading of~\cite{MMW05} reveals that all fundamental results obtained through Euler--Koszul technology do not require $\NN A$ to be a positive semigroup. 
As a matter of fact, $\calE_A$ was defined in \cite{MMW05} in terms of local cohomology with supports at the origin of $X^*$; the translation between this definition and ours here can only be done if $A$ is pointed. 
On the other hand, it is the Ext-based definition that is (implicitly) used in all proofs in loc.~cit. 

In consequence, the main theorems in \cite{Wal07} and \cite{SW09} remain true in the absence of positivity since the only ingredients in their proofs that are specific to the hypergeometric situation are those of \cite{MMW05}. 
\end{rmk}

\section{Pyramids and resonance centers}

\begin{dfn}
For any subset $F$ of the columns of $A$ we write $\bar F$ for the complement $A\smallsetminus F$.

A \emph{face of $A$} is any subset $F\subseteq A$ subject to the condition that there be a linear functional $\phi_F\colon \ZZ A\to\ZZ$
that vanishes on $F$ but is positive on $\bar F$. This includes $F=A$
as possibility. Every face contains all units of $\NN A$, and
 $A$ is positive if and only if the empty set is a face of $A$.

For a given face $F$, we set
\[
I^F_A:=I_A+R_A\cdot \del_{\bar F}
\]
and note that $R_A/I^F_A=S_F$ as $R_A$-module.
\end{dfn}

\begin{dfn}
Let $F$ be a face of $A$.
The parameter $\beta\in\CC^d$ is \emph{$F$-resonant} if $\beta\in \ZZ A+\CC G$ for a proper subface $G$ of $F$.

If $\beta$ is $G$-resonant for all faces $G$ properly containing $F$, but not for $F$ itself, we call $F$ a \emph{resonance center for $\beta$}.
\end{dfn}

A resonance center is a minimal face $F$ for which $\beta\in \ZZ A+\CC F$. 
Every parameter $\beta$ has a resonance center; $A$ is a (and then the only) center of resonance for $\beta$ if and only if $\beta$ is nonresonant in the
usual sense (i.e., $\beta\not\in\Res(A)$, defined in \eqref{17}). 
On the other hand, for positive $A$, the empty face is a resonance center for $\beta$ if and only if $\beta\in\ZZ A$.

\begin{exa}
It is easy to have several resonance centers for $\beta$. 
For example, consider $\beta=(\frac12,1)$ on the quadric cone $A=\begin{pmatrix}1&1&1\\0&1&2\end{pmatrix}$; $\beta$ has both extremal rays as resonance centers.
\end{exa}

\begin{dfn}
We say that $A$ is a(n iterated) \emph{pyramid over the face $F$} if  $d=\dim_\ZZ(\ZZ A)$ equals $|\bar F|+\dim_\ZZ(\ZZ F)$. 
\end{dfn}

The following equivalences are trivial or follow from \cite[Lem.~3.13]{Wal07}.

\begin{lem}\label{4}
The following are equivalent:
\begin{enumerate}
\item $F$ is a face and $A$ is a pyramid over $F$;
\item $\bolda_j\not\in\QQ (A\smallsetminus \{\bolda_j\})$ for any $j\not\in F$;
\item\label{5} $\ZZ A=\ZZ \bolda_j\oplus\ZZ(A\smallsetminus \{\bolda_j\})$ for any $j\not\in F$;
\item $\vol_F(F)=\vol_A(A)$;
\item\label{22} for every $\beta\in\CC A$, the coefficients $c_j$ in the sum $\beta=\sum_A c_j\bolda_j$ are uniquely determined by $\beta$ for $j\not\in F$;
\item the generators $\square_A$ of $I_A$ do not involve $\del_j$ for any $j\not\in F$;
\item $S_F\otimes_{\CC}\CC[\del_{\bar F}]=S_A$ as $R_A$-modules.
\end{enumerate}
\end{lem}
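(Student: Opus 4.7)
The plan is to run a cycle of implications among the seven conditions. I would partition them into a linear-algebraic cluster (1)$\Leftrightarrow$(2)$\Leftrightarrow$(3)$\Leftrightarrow$(5), the algebraic bridge (2)$\Leftrightarrow$(6)$\Leftrightarrow$(7), and the volume equivalence (1)$\Leftrightarrow$(4); the last is the step I expect to be the main obstacle.

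For the cluster (1)$\Leftrightarrow$(2)$\Leftrightarrow$(3)$\Leftrightarrow$(5): each condition is another way of encoding the linear independence of $\{\bolda_j\mid j\notin F\}$ modulo $\QQ F$. From (1), the dimension identity $d=|\bar F|+\dim_\ZZ(\ZZ F)$ together with the spanning $\QQ A=\QQ F+\sum_{j\notin F}\QQ\bolda_j$ forces this independence, giving (2); uniqueness of the coefficients $c_j$ for $j\notin F$ is then a reformulation, giving (5). The upgrade (2)$\Rightarrow$(3) uses $\ZZ A=\ZZ^d$: the sum $\ZZ\bolda_j+\ZZ(A\smallsetminus\{\bolda_j\})$ is a full-rank sublattice of $\ZZ^d$, hence equals $\ZZ^d$, while the intersection $\ZZ\bolda_j\cap\ZZ(A\smallsetminus\{\bolda_j\})$ is torsion in $\ZZ^d$ and therefore zero. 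Conversely, (3) recovers the dimension identity, and one constructs a face functional by decreeing $\phi_F|_{\QQ F}=0$ and $\phi_F(\bolda_j)=1$ for $j\notin F$, then clearing denominators to obtain an integer functional positive on $\bar F$; this certifies $F$ as a face and completes (1).

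For (2)$\Leftrightarrow$(6)$\Leftrightarrow$(7): since $\square_\boldu=\del^{\boldu_+}-\del^{\boldu_-}$, the relation $\square_\boldu$ involves $\del_j$ iff $u_j\ne 0$, and the existence of some $\boldu\in\ker(A)$ with $u_j\ne 0$ for $j\notin F$ is equivalent to $\bolda_j\in\QQ(A\smallsetminus\{\bolda_j\})$, giving (2)$\Leftrightarrow$(6). For (6)$\Rightarrow$(7), if $\square_A\subseteq\CC[\del_F]$ then $I_A$ is generated inside $\CC[\del_F]$, and its contraction $I_A\cap\CC[\del_F]$ is the toric ideal of $F$; hence $S_A=S_F\otimes_\CC\CC[\del_{\bar F}]$ as $R_A$-modules. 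The reverse implication reads the generators of $I_A$ off this tensor decomposition.

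The remaining link (1)$\Leftrightarrow$(4) is the pyramid volume formula. When $A$ is an iterated pyramid over $F$, each apex column $\bolda_j$ (for $j\notin F$) projects to a lattice generator of $\ZZ A/\ZZ(A\smallsetminus\{\bolda_j\})$ by (3), so each step of the pyramid iteration contributes a normalized height of $1$ and hence $\vol_A(A)=\vol_F(F)$. Conversely, the general inequality $\vol_A(A)\ge\vol_F(F)$ becomes an equality only when no iterate contributes extra volume, which forces the direct-sum condition (3). This lattice-volume translation is the main technical obstacle, and I would appeal to \cite[Lem.~3.13]{Wal07} to streamline the bookkeeping.
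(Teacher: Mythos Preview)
Your proposal is correct and matches the paper's approach: the paper simply states that the equivalences ``are trivial or follow from \cite[Lem.~3.13]{Wal07}'', and you have spelled out the trivial linear-algebraic and toric-ideal equivalences while invoking the same reference for the volume identity $(1)\Leftrightarrow(4)$. One cosmetic point: in your $(2)\Rightarrow(3)$ step, the sum $\ZZ\bolda_j+\ZZ(A\smallsetminus\{\bolda_j\})$ equals $\ZZ^d$ not because it is full rank but because it is literally $\ZZ A$, which is assumed to be $\ZZ^d$.
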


\begin{ntn}
Suppose $F$ is any nonempty face of $A$, and let $X_F$, $X^*_F$, $T_F$, $H_\bullet^F$, etc.\ be defined as in Section~\ref{15} with $A$ replaced by $F$ (cf.~Remark~\ref{41} for the case where $\ZZ A/\ZZ F$ has torsion).
Write $E^F=E^F_1,\dots,E^F_d$ where $E^F_i:=\sum_{j\in F}a_{i,j}x_j\del_j$ is the part of $E_i$ supported in $F$. 
Then, in particular, $M_F(\beta)=D_F/(D_F\cdot\ideal{E^F-\beta}+D_F\cdot I_F)$ for $\beta\in\CC F$.
\end{ntn}

Suppose now that $A$ is a pyramid over the face $F$, and let $\beta\in\CC A$. 
The splitting in Remark~\ref{4}.\eqref{5} corresponds to a splitting of tori $T_A=T_F\times\prod_{\bolda_j\in\bar F}T_{\bolda_j}$ which in turn gives a splitting of the spaces of Lie algebra characters $\CC A=\CC F\oplus\bigoplus_{\bolda_j\in\bar F}\CC\bolda_j$.
Then $\beta$ decomposes correspondingly as
\[
\beta=\beta^F+\sum_{j\in \bar F}\beta^{\bar F}_j.
\] 
Let $\iota_F\colon X^*_F\into X^*_A$ be the inclusion.
By \cite[Lem.~4.8]{MMW05}, for $\beta\in\CC F$, 
\begin{align}\label{29}
(\calF\circ\iota_{F,+}\circ\calF^{-1})M_F(\beta)
&=\CC[x_{\bar F}]\otimes_\CC M_F(\beta)\\
\nonumber&\cong H_0(S_F,\beta)=D_A/(D_A\cdot\ideal{E^F-\beta}+D_A\cdot I_A^F)
\end{align}
as $D_A$-modules. 
In the following lemma, \eqref{27} follows from \eqref{26} and
\eqref{29} above.

\begin{lem}\label{6}
If $A$ is a pyramid over $F$ then the following conditions hold:
\begin{enumerate}
\setcounter{enumi}{7}
\item\label{26} the ideal $H_A(\beta)$ contains $x_j\del_j-\beta^{\bar F}_j$ for $j\not\in F$;
\item\label{27} $M_A(\beta)(x_A)=\CC(x_A)\otimes_{\CC[x_F]}M_F(\beta)$ for $\beta\in\CC F$; 
\item\label{24} the solutions of $M_A(\beta)$ are the solutions of $M_F(\beta^F)$, multiplied with the unique solution to the system
\[
\{x_j\del_j\bullet f=\beta^{\bar F}_j\cdot f\}_{j\in\bar F}.
\]
\end{enumerate}
In particular, $\beta\in\calE_A$ if and only if $\beta^F\in\calE_F$.
\end{lem}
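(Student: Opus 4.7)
The plan is to handle (\ref{26}), (\ref{27}), (\ref{24}), and the final assertion in this order, with (\ref{26}) carrying the essential new content and the remaining parts following by substitution or bookkeeping.

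For (\ref{26}), I would exploit the pyramid splitting of Lemma~\ref{4}.(\ref{5}): for each $j\in\bar F$, the direct sum $\ZZ A=\ZZ\bolda_j\oplus\ZZ(A\smallsetminus\{\bolda_j\})$ yields a $\ZZ$-linear projection $\phi_j\colon\ZZ A\to\ZZ$ sending $\bolda_k$ to $\delta_{j,k}$. Extending $\phi_j$ $\CC$-linearly to $\CC^d$ and taking the corresponding $\CC$-linear combination of the generators $E_i-\beta_i$ of $H_A(\beta)$, the definition $E_i=\sum_k a_{i,k}x_k\del_k$ collapses via $\sum_i\phi_j(e_i)a_{i,k}=\phi_j(\bolda_k)=\delta_{j,k}$ into
\[
\sum_{i=1}^d\phi_j(e_i)(E_i-\beta_i)=x_j\del_j-\phi_j(\beta).
\]
By the uniqueness statement of Lemma~\ref{4}.(\ref{22}), the $\bar F$-coefficients of $\beta$ are determined by $\beta$, so $\phi_j(\beta)=\beta^{\bar F}_j$, giving (\ref{26}). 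This identification of $\phi_j(\beta)$ is the only delicate step in the lemma; everything else is a formal consequence.

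As already indicated by the authors, (\ref{27}) follows from (\ref{26}) together with (\ref{29}). When $\beta\in\CC F$ one has $\beta^{\bar F}_j=0$, so (\ref{26}) yields $x_j\del_j\in H_A(\beta)$, hence $\del_j\equiv0$ in $M_A(\beta)(x_A)$ for each $j\in\bar F$ after inverting $x_j$. Each $E_i-\beta_i$ then reduces to $E_i^F-\beta_i^F$, and $I_A=I_F$ by Lemma~\ref{4}. Thus $H_A(\beta)(x_A)$ is generated by $E^F-\beta^F$, $I_F$, and $\del_{\bar F}$, which exactly matches the image of the presentation of $H_0(S_F,\beta)$ from (\ref{29}) after killing $\del_{\bar F}$ and inverting $x_A$; this collapses the factor $\CC[x_{\bar F}]$ into $\CC(x_A)$, yielding $\CC(x_A)\otimes_{\CC[x_F]}M_F(\beta)$.

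For (\ref{24}), apply (\ref{26}) to any solution $f$ of $H_A(\beta)$: the commuting system $\{x_j\del_j\bullet f=\beta^{\bar F}_j f\}_{j\in\bar F}$ has one-dimensional solution space spanned by $\prod_{j\in\bar F}x_j^{\beta^{\bar F}_j}$, so $f=h(x_F)\cdot\prod_{j\in\bar F}x_j^{\beta^{\bar F}_j}$ for some $h$. Substituting this ansatz into the remaining generators $E_i-\beta_i$ and $\square_A=\square_F$ of $H_A(\beta)$ and using $\beta_i=\beta^F_i+\sum_{j\in\bar F}a_{i,j}\beta^{\bar F}_j$ shows that $h$ must solve $H_F(\beta^F)$; the steps are reversible. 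This bijection of solution spaces (together with Kashiwara's Cauchy--Kovalevskaya theorem) gives $\rk M_A(\beta)=\rk M_F(\beta^F)$. Combined with $\vol_A(A)=\vol_F(F)$ from Lemma~\ref{4} and the rank characterization of $\calE_A$ in Remark~\ref{30}, the equivalence $\beta\in\calE_A\iff\beta^F\in\calE_F$ follows.
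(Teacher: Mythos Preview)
Your argument is correct and follows exactly the route the paper indicates (the paper only states that \eqref{27} follows from \eqref{26} and \eqref{29}, leaving the rest implicit). Your derivation of \eqref{26} via the projection $\phi_j$ coming from the splitting in Lemma~\ref{4}.\eqref{5}, the reduction of $H_A(\beta)(x_A)$ to the presentation of $H_0(S_F,\beta)$ in \eqref{29}, the separation-of-variables proof of \eqref{24}, and the deduction of the $\calE_A$ statement from the rank criterion in Remark~\ref{30} together with $\vol_A(A)=\vol_F(F)$ are all the natural steps and require no correction.
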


\begin{prp}
If $\beta\in\CC A$ has a resonance center $F$ over which $A$ is a
pyramid, then $F$ is the only resonance center for $\beta$. 
\end{prp}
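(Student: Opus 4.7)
The plan is to exploit the rigidity of the pyramid decomposition from Lemma~\ref{4}.\eqref{22}: when $A$ is a pyramid over $F$, the coefficients of the $\bolda_j$ with $j\notin F$ in any expression of $\beta$ as a $\CC$-linear combination of the columns of $A$ are uniquely determined by $\beta$. The proof then boils down to comparing two such expressions, one coming from $F$ and one from a competing resonance center $F'$.

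First I would unpack the assumption that $F$ is a resonance center: $\beta\in\ZZ A+\CC F$, so after writing $\beta=\gamma+\sum_j n_j\bolda_j$ with $\gamma\in\CC F$ and $n_j\in\ZZ$, pyramid uniqueness pins down the coefficients $\beta_j^{\bar F}$ in the splitting $\beta=\beta^F+\sum_{j\in\bar F}\beta_j^{\bar F}\bolda_j$ as $\beta_j^{\bar F}=n_j\in\ZZ$ for every $j\notin F$. In particular, the ``pyramid-direction part'' of $\beta$ is entirely integral.

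Now let $F'$ be any resonance center for $\beta$ and write $\beta=\gamma'+\sum_j m_j\bolda_j$ with $\gamma'=\sum_{j\in F'}c_j\bolda_j\in\CC F'$ and $m_j\in\ZZ$. For $j\in F'\smallsetminus F$ the index lies in $\bar F$, so comparing with the unique pyramid decomposition of $\beta$ gives $c_j+m_j=\beta_j^{\bar F}\in\ZZ$ and hence $c_j\in\ZZ$. Absorbing these integer terms into the lattice part yields
\[
\beta\in\ZZ A+\CC(F\cap F').
\]

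Finally I would observe that $F\cap F'$ is itself a face of $A$ (if $\phi_F$ and $\phi_{F'}$ are the defining linear functionals, then $\phi_F+\phi_{F'}$ vanishes on $F\cap F'$ and is strictly positive off it). Since $F\cap F'\subseteq F$ and $F$ is minimal among faces $G$ with $\beta\in\ZZ A+\CC G$, this forces $F\cap F'=F$, i.e.\ $F\subseteq F'$; the minimality of $F'$ then forbids $F$ from being a proper subface of $F'$, so $F=F'$. I expect the only technical point to be the integrality of the $c_j$ with $j\in F'\smallsetminus F$: this is exactly where the pyramid hypothesis is indispensable, for without uniqueness of the $\bar F$-coefficients the comparison producing $\beta\in\ZZ A+\CC(F\cap F')$ collapses.
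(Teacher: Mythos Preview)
Your argument is correct and rests on the same key observation as the paper's proof, namely the uniqueness of the $\bar F$-coefficients from Lemma~\ref{4}.\eqref{22}. The only organisational difference is that the paper removes a single column $\bolda_k\in G\cap\bar F$ from the competing center $G$ (arguing that $G\smallsetminus\{\bolda_k\}$ is a face via the direct-sum splitting $\ZZ A=\ZZ\bolda_k\oplus\ZZ(A\smallsetminus\{\bolda_k\})$) to reach an immediate contradiction, whereas you pass in one step to $F\cap F'$ and invoke the general fact that an intersection of faces is a face; both routes are equally short.
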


\begin{proof}
Let $G$ be a second resonance center for $\beta$ and suppose $G$ meets the complement of $F$; pick $\bolda_k\in G\cap\bar F$. 
Since $\ZZ \bolda_k$ is a direct summand of $\ZZ A$, it is also a direct summand
of $\ZZ G$. It follows that $G\smallsetminus \{\bolda_k\}$ is a face $G'$ of
$A$. 

As $F$ and $G$ are resonance centers, 
\[
\beta=z_k\bolda_k+\sum_{j\in\bar F\smallsetminus\{k\}}z_j\bolda_j+\sum_{j\in F}c_j \bolda_j,\quad
\beta=c'_k\bolda_k+\sum_{j\in\bar  G'\smallsetminus\{k\}} z'_j\bolda_j+\sum_{j\in G'}c'_j\bolda_j
\]
where $z_k,z_j,z'_j\in \ZZ$ and $c'_k,c_j,c'_j\in\CC$. 
By Lemma~\ref{4}.\eqref{22}, the coefficients for $\bolda_k$ in these sums are identical, $c'_k=z_k\in\ZZ$. 
It follows that
\[
\beta=\left(z_k\bolda_k+\sum_{j\in\bar  G'\smallsetminus\{k\}}z'_j\bolda_j\right)+\sum_{G'}c'_j\bolda_j\in\ZZ A+\CC G'.
\]
This contradicts $G$ being a resonance center. Thus $G\cap\bar
F=\emptyset$ and so $G\subseteq F$. 
But then $F$ can only be a resonance center if $F=G$.
\end{proof}

\section{Resonance implies reducibility}

The following result generalizes Theorem~3.4 in \cite{Wal07} and Theorem~1.3 in \cite{Beu10}.

\begin{thm}\label{8}
Let $F$ be a resonance center for $\beta\in\CC A$.
If $A$ is not a pyramid over $F$ then $M_A(\beta)$ has reducible monodromy.
\end{thm}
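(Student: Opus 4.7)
The plan is to produce, after a harmless shift, a natural $D_A$-linear surjection from $M_A(\beta)$ onto a $D_A$-module of strictly smaller generic rank; localizing at $x_A$ will then furnish a nontrivial proper $D_A(x_A)$-quotient of $M_A(\beta)(x_A)$, which is exactly reducibility of the monodromy.

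Since monodromy irreducibility depends only on $\beta\in\CC A/\ZZ A$ and $F$ being a resonance center forces $\beta\in\ZZ A+\CC F$, I first replace $\beta$ by its representative in $\CC F$; the resonance-center hypothesis on $F$ is unchanged. The inclusion $I_A\subseteq I_A^F$ then yields a surjection of toric $R_A$-modules $S_A\twoheadrightarrow S_F=R_A/I_A^F$. Applying the exact Euler--Koszul functor $K_\bullet(-,\beta)$ and taking $H_0$ produces a surjective $D_A$-linear map
\[
\psi\colon M_A(\beta)=H_0(S_A,\beta)\twoheadrightarrow H_0(S_F,\beta).
\]
By \eqref{29} the target equals $\CC[x_{\bar F}]\otimes_\CC M_F(\beta)$ as a $D_A$-module, so its $\CC(x_A)$-rank equals $\rk M_F(\beta)$.

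The crux is the rank comparison. On the source, Remark~\ref{30} gives $\rk M_A(\beta)\ge\vol_A(A)$. On the target, the hypothesis that $F$ is a resonance center means $\beta\not\in\ZZ A+\CC G$ for every proper subface $G\subsetneq F$; since $\ZZ F\subseteq\ZZ A$, this forces $\beta\not\in\Res(F)$. The exceptional arrangement of $F$ is contained in its resonance arrangement, $\calE_F\subseteq\Res(F)$ (as can be read off from the $\qdeg$-of-$\Ext$ description of $\calE_F$ and the fact that the relevant Ext modules are supported on proper faces of $F$), so $\beta\not\in\calE_F$, and Remark~\ref{30} applied to $F$ yields $\rk M_F(\beta)=\vol_F(F)$. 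The assumption that $A$ is not a pyramid over $F$, combined with Lemma~\ref{4}, gives $\vol_F(F)<\vol_A(A)$, hence
\[
\rk H_0(S_F,\beta)=\vol_F(F)<\vol_A(A)\le\rk M_A(\beta).
\]

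Localizing $\psi$ at $x_A$ therefore produces a surjection of $D_A(x_A)$-modules whose target is nonzero (positive rank for $F$ nonempty; the edge case $F=\emptyset$ is handled by the same argument with $H_0(S_\emptyset,0)=\CC[x_A]$) yet of strictly smaller $\CC(x_A)$-dimension than the source, giving the required proper nontrivial $D_A(x_A)$-quotient of $M_A(\beta)(x_A)$. The main technical obstacle I foresee is the containment $\calE_F\subseteq\Res(F)$ used in the rank computation; should the direct $\Ext$/$\qdeg$ bookkeeping prove awkward, a cleaner substitute is to invoke \eqref{43} applied to $F$: since $\beta\not\in\Res(F)$, one has $M_F(\beta)\cong\calF_F\circ\phi_{F,+}\calM^F(\beta)$, and $\rk M_F(\beta)=\vol_F(F)$ falls out of the geometry of the $|F|$-dimensional torus orbit closure in $X^*_F$.
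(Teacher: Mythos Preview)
Your proof is correct and follows essentially the same route as the paper: shift $\beta$ into $\CC F$ via \cite[Thm.~3.15]{Wal07}, apply the Euler--Koszul surjection $H_0(S_A,\beta)\twoheadrightarrow H_0(S_F,\beta)$, identify the target's rank with $\vol_F(F)$ using \eqref{29} and nonresonance of $\beta$ for $F$, and compare against $\rk M_A(\beta)\ge\vol_A(A)>\vol_F(F)$ via Lemma~\ref{4}. The only cosmetic differences are that the paper handles $F=\emptyset$ separately at the outset rather than as a closing edge case, and that it invokes Remark~\ref{30} for $\rk M_F(\gamma)=\vol_F(F)$ without isolating the containment $\calE_F\subseteq\Res(F)$ that you (rightly) flag as the one nontrivial step.
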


\begin{proof}
By hypothesis, we have $\beta-\gamma\in\ZZ A$ for some $\gamma\in\CC F$.
We first dispose of the case $F=\emptyset$. 
In that case, $A$ is positive, $\gamma=0$, $\beta\in\ZZ A$ and, by \cite[Thm.~3.15]{Wal07}, we may assume $\beta=0$. 
Then $\CC(x_A)$ is a rank-$1$ quotient of $M_A(\beta)(x_A)$.
But $A$ is not a pyramid over $F$, so 
\[
\rk(M_A(\beta))\ge\vol_A(A)>\vol_F(F)=1=\rk(\CC(x_A))
\]
by Remark~\ref{30} and Lemma~\ref{4}.
So $\CC(x_A)$ is a proper quotient of $M_A(\beta)(x_A)$, and hence $M_A(\beta)$ has reducible monodromy.
We can hence assume that $F$ is not empty, and by \cite[Thm.~3.15]{Wal07}, we need to show the reducibility of $M_A(\gamma)$.

Consider the surjection 
\[
M_A(\gamma)=H_0(S_A,\gamma)\onto H_0(S_F,\gamma)
\]
induced by the surjection $S_A\onto S_F$. 
Therefore, it suffices to show that $0<\rk(H_0(S_F,\gamma))<\vol_A(A)$ by Remark~\ref{30}.
Since $F$ is a resonance center for $\beta$, and hence for $\gamma$ as well, $\gamma$ is a nonresonant parameter for the GKZ-system
\[
M_F(\gamma)=D_F/(D_F\cdot\ideal{E^F-\gamma}+D_A\cdot I_F).
\]
Then, by Remark~\ref{30}, $\rk(M_F(\gamma))=\vol_F(F)>0$ and $\rk(M_A(\gamma))\ge\vol_A(A)$.
As $A$ is not a pyramid over $F$, $\vol_F(F)<\vol_A(A)$ by Lemma~\ref{4}.
Finally, $\rk(M_F(\gamma))=\rk(H_0(S_F,\gamma))$  by \eqref{29}.
Combining the above (in)equalities yields the claim.
\end{proof}

\section{Resonance follows from reducibility}\label{13}

We now generalize Theorem~2.11 in \cite{GKZ90}.

\begin{thm}\label{9}
Let $F$ be a resonance center for $\beta$. 
If $A$ is a  pyramid over $F$ then $M_A(\beta)$ has irreducible monodromy.
\end{thm}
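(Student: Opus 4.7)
The plan is to reduce to the nonresonant case $F = A$ by combining \cite[Thm.~3.15]{Wal07} with the pyramid structure, and then to invoke the $\calD$-module description \eqref{43} from \cite{SW09} to handle that case without the Riemann--Hilbert correspondence.

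The first reduction is purely algebraic. Since $F$ is a resonance center for $\beta$, there is $\gamma \in \CC F$ with $\beta - \gamma \in \ZZ A$; by \cite[Thm.~3.15]{Wal07} we may replace $\beta$ by $\gamma$, and $F$ remains a resonance center for $\gamma$. Because $\gamma^{\bar F} = 0$, Lemma~\ref{6}.\eqref{26} gives $x_j\del_j \in H_A(\gamma)$ for $j \notin F$, so $\del_j = 0$ on $M_A(\gamma)(x_A)$ after inverting $x_A$. Combined with Lemma~\ref{6}.\eqref{27}, this yields a canonical bijection between $D_A(x_A)$-submodules of $M_A(\gamma)(x_A)$ and $D_F(x_F)$-submodules of $M_F(\gamma)(x_F)$, so it suffices to prove monodromy irreducibility of $M_F(\gamma)$. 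Since $A$ is a pyramid over $F$, every face of $F$ is a face of $A$ (extend the face-defining functional on $\ZZ F$ via $\ZZ A = \ZZ F \oplus \ZZ\bar F$ by one positive on $\bar F$), so $F$ being a resonance center for $\gamma$ translates, after replacing $A$ with $F$, into nonresonance of $\gamma$. We are thus reduced to proving: if $\gamma \notin \Res(A)$, then $M_A(\gamma)$ has irreducible monodromy.

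For this nonresonant case, \eqref{43} identifies $M_A(\gamma) \cong \calF \circ \phi_+\calM(\gamma)$, where $\phi\colon T \into X$ is the orbit inclusion and $\calM(\gamma) = \calO_T \cdot t^{-\gamma-1}$ is a rank-one, and so simple, $\calD_T$-module. The Fourier transform $\calF$ is an autoequivalence of the holonomic $\calD$-module category preserving simple objects, so monodromy irreducibility of $M_A(\gamma)$ reduces to a simplicity statement for $\phi_+\calM(\gamma)$.

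The main obstacle is to show that $\phi_+\calM(\gamma)$ acquires no extra simple constituents supported on the boundary strata of the orbit closure, which are indexed by the proper faces of $A$. Nonresonance of $\gamma$ is precisely the condition under which such boundary contributions vanish: it is equivalent to the quasi-isomorphism $K_\bullet(S_A,\gamma) \simeq K_\bullet(S_A[\del_A^{-1}],\gamma)$ from \cite[Thm.~3.6]{SW09}, and the cone between these Euler--Koszul complexes measures exactly the boundary local cohomology of $\phi_+\calM(\gamma)$. To finish, one transfers any nonzero $D_A(x_A)$-submodule of $M_A(\gamma)(x_A)$ back through $\calF$ and $\phi_+$ to the orbit $\phi(T)$, where the rank-one simplicity of $\calM(\gamma)$, combined with the $T$-equivariance of $\phi_+\calM(\gamma)$ and the vanishing of boundary contributions, forces the submodule to be everything. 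Carrying out this transfer is the heart of the argument and is where the full nonresonance hypothesis is spent.
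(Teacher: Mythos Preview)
Your reduction to the nonresonant case $F=A$ via \cite[Thm.~3.15]{Wal07} and Lemma~\ref{6} is correct and matches the paper (which treats the cases $F=A$, $F$ a proper face, and $F=\emptyset$ in that order, but by the same mechanism).

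The gap is in the nonresonant case itself. You correctly reduce to showing that $\phi_+\calM(\gamma)$ is simple, but you do not prove it. Re-invoking the quasi-isomorphism $K_\bullet(S_A,\gamma)\simeq K_\bullet(S_A[\del_A^{-1}],\gamma)$ buys you nothing new here: that quasi-isomorphism is precisely what produces \eqref{43}, which you have already consumed, and it says nothing further about proper subobjects of $\phi_+\calM(\gamma)$. Your final ``transfer back through $\calF$ and $\phi_+$'' is not an argument---the Fourier transform does not respect localization at $x_A$ in any way that lets you pull a $D_A(x_A)$-submodule of $M_A(\gamma)(x_A)$ back to a $\calD_T$-submodule of $\calM(\gamma)$, and invoking ``$T$-equivariance'' does not bridge that gap. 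You yourself flag this step as ``the heart of the argument'' without carrying it out; that is exactly where the proof is missing.

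The paper's device is different and sidesteps the difficulty entirely: factor $\phi=\varpi\circ\iota$ through the ambient torus $Y^*=(\CC^*)^n$, where $\iota\colon T\hookrightarrow Y^*$ is a closed embedding of tori (irreducibility preserved by Kashiwara's equivalence) and $\varpi\colon Y^*\hookrightarrow X^*$ is an open embedding between affine varieties (handled by comparing global sections on source and target, both being $D$-affine). With this factorization, $\phi_+\calM(\gamma)$ is irreducible, and then \eqref{43} together with the fact that the Fourier transform preserves composition series gives that $M_A(\gamma)$ is an honest irreducible $D_A$-module---stronger than the monodromy irreducibility you are after.
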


\begin{proof}

First consider the case $F=A$. 
Then $\beta\not\in\Res(A)$ and hence
$M_A(\beta)=\calF\circ\phi_+(\calM_\beta)$ by \eqref{43}.
As in the proof of \cite[Prop.~2.1]{SW09}, factor $\phi=\varpi\circ\iota$ into the closed embedding of tori
\begin{equation}\label{19}
\iota:T\into\Spec(\CC[\ZZ^n])=Y^*\cong(\CC^*)^n
\end{equation}
induced by $\ZZ A\subseteq\ZZ^n$, followed by the open embedding 
\begin{equation}\label{20}
\varpi:Y^*=X^*\smallsetminus\Var(\del_1\cdots\del_n)\into X^*.
\end{equation}
By Kashiwara equivalence, $\iota$ preserves irreducibility. 
The same holds for $\varpi$, because $D$-affinity of both the target and the source of the inclusion map allows to detect submodules on global sections. 
But global sections on $Y^*$ and $X^*$ agree because we are looking at an open embedding. 
Since $\calM(\beta)$ is clearly irreducible, $\phi_+\calM(\beta)$ is as well. 
As Fourier transforms preserve composition chains, $M_A(\beta)$ is irreducible.
It follows that $M_A(\beta)$ has irreducible monodromy.

Suppose now that $F$ is a proper face. 
Choose $\gamma\in\CC F$ with $\beta-\gamma\in\ZZ A$. 
Then $M_F(\gamma)$ is irreducible by the first part of the proof, and the claim follows from Lemma~\ref{6}.\eqref{27} and \cite[Thm.~3.15]{Wal07}.
Finally, if $F=\emptyset$ then $A$ is positive and Lemma~\ref{6}.\eqref{26} shows that $M_A(\beta)(x_A)=\CC(x_A)$ which has clearly irreducible monodromy.
\end{proof}


\bibliographystyle{amsalpha}
\bibliography{resgkz}

\def\cprime{$'$}
\providecommand{\bysame}{\leavevmode\hbox to3em{\hrulefill}\thinspace}
\providecommand{\MR}{\relax\ifhmode\unskip\space\fi MR }
\providecommand{\MRhref}[2]{%
  \href{http://www.ams.org/mathscinet-getitem?mr=#1}{#2}
}
\providecommand{\href}[2]{#2}
\begin{thebibliography}{MMW05}

\bibitem[Ado94]{Ado94}
Alan Adolphson, \emph{Hypergeometric functions and rings generated by
  monomials}, Duke Math. J. \textbf{73} (1994), no.~2, 269--290. \MR{96c:33020}

\bibitem[Ber11]{Ber08}
Christine Berkesch, \emph{The rank of a hypergeometric system}, Compos. Math.
  \textbf{147} (2011), no.~1, 284--318.

\bibitem[Beu10]{Beu10}
Frits Beukers, \emph{Irreducibility of {$A$}-hypergeometric systems}, arXiv.org
  \textbf{math.AG} (2010), no.~1007.4644.

\bibitem[CDD99]{CAD99}
Eduardo Cattani, Carlos D'Andrea, and Alicia Dickenstein, \emph{The {${\mathscr
  A}$}-hypergeometric system associated with a monomial curve}, Duke Math. J.
  \textbf{99} (1999), no.~2, 179--207. \MR{1708034 (2001f:33018)}

\bibitem[GGZ87]{GGZ87}
I.~M. Gel{\cprime}fand, M.~I. Graev, and A.~V. Zelevinski{\u\i},
  \emph{Holonomic systems of equations and series of hypergeometric type},
  Dokl. Akad. Nauk SSSR \textbf{295} (1987), no.~1, 14--19. \MR{MR902936
  (88j:58118)}

\bibitem[GKZ90]{GKZ90}
I.~M. Gel{\cprime}fand, M.~M. Kapranov, and A.~V. Zelevinsky, \emph{Generalized
  {E}uler integrals and {$A$}-hypergeometric functions}, Adv. Math. \textbf{84}
  (1990), no.~2, 255--271. \MR{MR1080980 (92e:33015)}

\bibitem[GZK89]{GKZ89}
I.~M. Gel{\cprime}fand, A.~V. Zelevinski\u{\i}, and M.~M. Kapranov,
  \emph{Hypergeometric functions and toric varieties}, Funktsional. Anal. i
  Prilozhen. \textbf{23} (1989), no.~2, 12--26. \MR{90m:22025}

\bibitem[MM06]{MM06}
Laura~Felicia Matusevich and Ezra Miller, \emph{Combinatorics of rank jumps in
  simplicial hypergeometric systems}, Proc. Amer. Math. Soc. \textbf{134}
  (2006), no.~5, 1375--1381 (electronic). \MR{2199183 (2006j:33016)}

\bibitem[MMW05]{MMW05}
Laura~Felicia Matusevich, Ezra Miller, and Uli Walther, \emph{Homological
  methods for hypergeometric families}, J. Amer. Math. Soc. \textbf{18} (2005),
  no.~4, 919--941 (electronic). \MR{MR2163866}

\bibitem[Oku06]{Oku06}
Go~Okuyama, \emph{{$\mathscr A$}-hypergeometric ranks for toric threefolds},
  Int. Math. Res. Not. (2006), Art. ID 70814, 38. \MR{2250006 (2007h:14076)}

\bibitem[SST00]{SST00}
Mutsumi Saito, Bernd Sturmfels, and Nobuki Takayama, \emph{Gr\"obner
  deformations of hypergeometric differential equations}, Algorithms and
  Computation in Mathematics, vol.~6, Springer-Verlag, Berlin, 2000.
  \MR{1734566 (2001i:13036)}

\bibitem[ST98]{ST98}
Bernd Sturmfels and Nobuki Takayama, \emph{Gr\"obner bases and hypergeometric
  functions}, Gr\"obner bases and applications ({L}inz, 1998), London Math.
  Soc. Lecture Note Ser., vol. 251, Cambridge Univ. Press, Cambridge, 1998,
  pp.~246--258. \MR{1708882 (2001c:33026)}

\bibitem[SW08]{SW08}
Mathias Schulze and Uli Walther, \emph{Irregularity of hypergeometric systems
  via slopes along coordinate subspaces}, Duke Math. J. \textbf{142} (2008),
  no.~3, 465--509. \MR{2412045 (2009b:13067)}

\bibitem[SW09]{SW09}
\bysame, \emph{Hypergeometric {D}-modules and twisted {G}au\ss-{M}anin
  systems}, J. Algebra \textbf{322} (2009), no.~9, 3392--3409. \MR{2567427}

\bibitem[Wal07]{Wal07}
Uli Walther, \emph{Duality and monodromy reducibility of {$A$}-hypergeometric
  systems}, Math. Ann. \textbf{338} (2007), no.~1, 55--74. \MR{2295504
  (2008e:32043)}

\end{thebibliography}

\end{document}